 \newtheorem{thm}{Theorem}
 \newtheorem{prop}[thm]{Proposition}
 \newtheorem{rem}{Remark}
 \newtheorem{ex}{Example}
 \numberwithin{equation}{section}
\newcommand{\Sc}[2]{\langle #1,#2\rangle}
\newcommand{\Hom}{\mathrm{Hom}}
\newcommand{\Aut}{\mathrm{Aut}}
\newcommand{\Ext}{\mathrm{Ext}}
\newcommand{\RQ}[1]{\textrm{Rep}_Q(#1)}
\newcommand{\Rep}[1]{\RQ{\mathbb F_{#1}}}
\newcommand{\Hu}[1]{\mathbf{H}_{#1}}
\newcommand{\Ht}[1]{\mathbf{H}^{tw}_{#1}}
\newcommand\Zn{\mathbb{Z}}
\newcommand\Nn{\mathbb{N}}
\newcommand\Cn{\mathbb{C}}
\begin{document}

\title{Some remarks on Hall algebra of bound quiver}

\author{Kostiantyn Iusenko}
\address[Kostiantyn Iusenko]{Departamento de Matem\'{a}tica Univ. de S\~ao Paulo, Caixa Postal 66281, S\~ao Paulo, SP 05315-970 -- Brazil}
\email{iusenko@ime.usp.br}

\author{Evan Wilson}
\address[Evan Wilson]{Departamento de Matem\'{a}tica Univ. de S\~ao Paulo, Caixa Postal 66281, S\~ao Paulo, SP 05315-970 -- Brazil}
\email{wilsoneaster@gmail.com}

\begin{abstract} 
In this paper we describe the twisted Hall algebra of bound quiver with small homological dimension. The description is given in the terms of the quadratic form associated with the corresponding bound quiver. 
\end{abstract}

\maketitle

\section*{Introduction}

Let $Q$ be a quiver, $\mathfrak g$ be a symmetric Kac-Moody algebra associated with $Q$, and $\Rep{q}$ be the category of finite-dimensional representations of $Q$ over $\mathbb F_q$, the field with $q=p^n$ elements.  In his remarkable papers \cite{rin,rin2} Ringel proved that if $Q$ is a Dynkin quiver then there exists an isomorphism between the (twisted) Hall algebra associated with $\Rep{q}$ and the positive part $U_t^+(\mathfrak{g})$ of quantized universal enveloping algebra with $t^2=q$. 

In this paper we consider the case of a quiver $Q$ bound by an admissible ideal $I$. To each such quiver we associate an associative algebra $U_t^+(Q)$ given by relations and generators. In the case when $\mathbb F_q Q/I$ is a representation directed algebra of global dimension at most 2 we show that there exists a homomorphism $\rho$ between $U_t^+(Q)$ and the corresponding twisted Hall algebra $\Ht{\Rep{q}}$, in which $\Rep{q}$ is a category of finite-dimensional bound representations of $Q$. In the case when $q\neq 2$ we show that $\rho$ is an isomorphism (see Section 2 for details).

In final section we state some concrete examples of bound quivers and corresponding twisted Hall algebras and compare these examples with the ones obtained in \cite{ChDe}.

\section*{Acknowledgements}This work was done during the visit of the authors to the University of S\~{a}o Paulo as a postdoctoral fellows. We are grateful to the University of S\~{a}o Paulo for hospitality, our supervisor Vyacheslav Futorny for stimulating discussion and to the Fapesp for financial support (grant numbers 2010/15781-0 and 2011/12079-5).

\section{Preliminaries} 
\subsection{Hall algebras}
Let $\mathcal A$ be an $\mathbb F_q$-linear finitary category.  We define the (Ringel) Hall algebra $\Hu{\mathcal A}$ to be the $\mathbb C$-vector space spanned by  elements $M$ in $\textrm{Iso}(\mathcal A)$ (the set of isomorphism classes of the objects in $\mathcal A$) with a product defined by:
\begin{equation*}
[M] \ast [N]=\sum_{R\in \Hu{\mathcal{A}}}  F_{M,N}^R [R]
\end{equation*}
where $F_{M,N}^R$ denotes the number of the sub-objects $X \subset R$ such that $X\simeq N$ and $R/X \simeq M$.
Let $K_0(\mathcal A)$ be Grothendieck group of the category $\mathcal A$. By $\langle -,- \rangle: K_0(\mathcal A) \times K_0(\mathcal A) \rightarrow \mathbb Z$ we denote the Euler characteristic of $\mathcal A$: 
$$
	\langle M,N\rangle = \sum_{k=0}^{\infty}(-1)^k \dim \Ext^k(M,N).
$$
The twisted version of the product is defined by 
\begin{equation*}\label{hallp}
[M]\cdot [N]=q^{\frac{1}{2}\Sc{M}{N}}\sum_{R\in \Hu{\mathcal{A}}} F_{M,N}^R [R]
\end{equation*}
Both products give $\Hu{\mathcal{A}}$ an associative algebra structure (see \cite{rin}). The twisted version of Hall algebra will be denoted by $\Ht{\mathcal A}$. 

We mention that both $\Hu{\mathcal{A}}$ and $\Ht{\mathcal{A}}$ possess a grading by the Grothendieck group $K_0(\mathcal A)$ (see \cite{rin}).  For $\alpha\in K_0(\mathcal A)$ we denote the degree $\alpha$ piece by $\Hu{\mathcal{A}}[\alpha]$ and by $\Ht{\mathcal{A}}[\alpha]$. 

For more information on Hall algebras the interested reader can see, for example, \cite{OS} and references therein.

\subsection{Lie Algebras Associated with Unit Forms}
Recall the class of Lie algebras associated with unit forms considered by Kosakowska \cite{kos}.  A unit form is a mapping $T:\Zn^{I}\to \Zn$ for some finite index set $I$ which is of the following form:
\begin{equation*}
T(\beta)=\sum_{i\in I} \beta_i^2+\sum_{(i,j)\in I\times I}a_{ij}\beta_i\beta_j
\end{equation*}
where $a_{ij}\in \Zn$.  An example is given by the form associated to a quiver defined in Section \ref{quiver}.  A root of $T$ is a vector $\beta \in \Zn^{I}$ satisfying $T(\beta)=1$.  A root $\beta$ is called positive if $\beta_i\geq 0,\ i \in I$.  We denote the set of roots of $T$ by $\Delta_T$ and the set of positive roots of $T$ by $\Delta^+_T$.  

Define the Lie algebra $L(T)$ to be the free Lie algebra with generators $\{e_i: \ i\in I\}$ modulo the ideal generated by the set of all elements of the form \begin{equation}  \label{multicomm}
[e_{i_1},[e_{i_2},[\cdots [e_{i_{k-1}},e_{i_k}]\cdots]]]
\end{equation}
such that $$\sum_{j=2}^k\alpha_{i_j}\in \Delta_T,\quad \mbox{but}\quad  \sum_{j=1}^k\alpha_{i_j}\notin \Delta_T,$$ where $\{\alpha_i;\ i\in I\}$ denotes the standard basis of $\Zn^I$.  The expression \eqref{multicomm}  appearing above is called a standard multicommutator.  This algebra receives an $\Nn^{I}$-grading where each generator $e_i$ has degree $\alpha_i$.

\begin{rem}
The Lie algebra $L(T)$ was shown \textup{(\cite[Proposition 4.4]{kos})} to be isomorphic to $G^+(T)$--the positive part of the Lie algebra studied by Barot, Kussin, and Lenzing in \cite{bkl}--in the case that $T$ is both weakly positive and positive semidefinite.
\end{rem}

\begin{rem}
Assuming that the form $T$ is positive definite the minimal set of defining relations in algebra $L(T)$ was constructed in \textup{\cite[Theorem 1.1]{kos}}.
\end{rem}

\subsection{Quantized Lie Algebras associated with Unit forms}
Let $T$ be a unit form and $\Delta^+_{T}$ its set of its positive roots. 
Let $\langle \_,\_ \rangle_T : \Zn^{I}\times \Zn^{I}\to \Zn$ be a bilinear form associated with $T$: 
$$
\langle \beta,\beta' \rangle_T:=\sum_{i\in I}\beta_i\beta'_i+\sum_{(i,j)\in I\times I}a_{ij}\beta_i\beta'_j.
$$
Denote by 
$$
\langle \beta,\beta' \rangle_T^{0}=\sum_{i\in I}\beta_i\beta'_i+\sum_{(i,j)\in I\times I}(a_{ij})_-\beta_i\beta'_j,
$$
where $(a_{ij})_-=\min\{a_{ij},0\}$. We consider the function $\nu:\Zn^{I}\times \Zn^{I}\to \Zn$ defined by 
\begin{equation} \label{nuDef}
	\nu(\beta,\beta')=\delta\Big (\sum_{(i,j)\in I\times I}(a_{ij})_-\beta_i\beta'_j\Big ) \langle\beta, \beta'\rangle_T^0,
\end{equation}
in which $\delta:\mathbb Z\rightarrow \{0,1\}$ is defined as $\delta(0)=1$ and $\delta(x)=0$ elsewhere.

\noindent The free associative algebra $\mathbb C\langle e_i; i\in I\rangle$ is $\Nn^{I}$-graded by giving $e_i$ degree $\alpha_i$.  Fixing $t\in \Cn$ we define the map 
\begin{equation} \label{adDef}
\text{ad}^t_{x}(y)=xy-t^{\langle \beta,\beta'\rangle_T-\langle \beta',\beta\rangle_T +2\nu(\beta',\beta)-2\nu(\beta,\beta')}yx
\end{equation}
for homogeneous elements $x$ and $y$ of degree $\beta$ and $\beta'$ respectively.
This map depends on the form $T$, but in what follows we use notation $\text{ad}^t_{x}(y)$ assuming that it is always clear which form we have. Note that if $\langle \_,\_\rangle_T=\langle \_,\_\rangle_T^0$ then \eqref{adDef} gives $$\text{ad}_x^t(y)=xy-t^{\pm \langle \beta,\beta'\rangle_T^0\pm\langle \beta',\beta\rangle_T^0}yx.$$  
This is slightly different from the quantized adjoint action: $xy-t^{\langle\beta,\beta'\rangle_T^0+\langle\beta',\beta\rangle_T^0}yx$. Our version is useful for studying the Hall algebra associated to a quiver, as we will see in the next section.

Consider the following family of associative algebras 
$$
	U_t^+(T)=\mathbb C\langle e_i; i\in I\rangle / \mathcal R^t_T,
$$
where $\mathcal R^t_T$ is the two-sided ideal generated by the set of all elements
\begin{equation}\label{adRel}
\text{ad}^t_{e_{i_1}}(\text{ad}^t_{e_{i_2}}(\cdots \text{ad}^t_{e_{i_{k-1}}}(e_{i_k})\cdots)),
\end{equation}
such that $\sum_{j=2}^k\alpha_{i_j}\in \Delta^+_T$ but $\sum_{j=1}^k\alpha_{i_j}\notin \Delta^+_T$.
\begin{rem} \label{Rem:spec}
The specialization of $U_t^+(T)$ at $t=1$ is isomorphic to the universal enveloping algebra of $L(T)$.
\end{rem}

\begin{rem}
Using Remark 1 one can show that in the case 
where $T$ is both weakly positive and positive semidefinite, $U_t^+(T)$ gives a quantization of a positive part of $G^+(T)$ studied in \cite{bkl}. 
\end{rem}

Here and further, for an $\mathbb N^I$ graded algebra $A$, we denote the $\alpha$-homogeneous piece by $A[\alpha]$ for $\alpha\in \mathbb N^I$.
\begin{prop} \label{equalDim}
	Let $\alpha \in \Nn^I$ and $t$ be any complex number then 
	$$\dim_{\mathbb C} U_t^+(T)[\alpha]=\dim_{\mathbb C}U_1^+(T)[\alpha]=\dim_{\mathbb C} U(L(T))[\alpha].$$
\end{prop}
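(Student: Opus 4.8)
The plan is to organise the whole family $\{U_t^+(T)\}$ as a deformation over the base ring $R=\Cn[t,t^{-1}]$ and to reduce the statement to producing a PBW-type basis of each graded piece that does not depend on $t$. The second equality is exactly Remark~\ref{Rem:spec}: at $t=1$ every power of $t$ equals $1$, so $\text{ad}^1$ in \eqref{adDef} is the ordinary commutator, whence $U_1^+(T)\cong U(L(T))$ as $\Nn^I$-graded algebras and $\dim_{\Cn}U_1^+(T)[\alpha]=\dim_{\Cn}U(L(T))[\alpha]$ for every $\alpha$. It therefore remains to prove the first equality $\dim_{\Cn}U_t^+(T)[\alpha]=\dim_{\Cn}U_1^+(T)[\alpha]$, which I will do for $t\neq 0$ by a semicontinuity argument refined by an explicit basis; the single value $t=0$, where \eqref{adDef} need not specialise, is then absorbed into the same leading-term bookkeeping after working over $\Cn[t]$ and clearing the finitely many negative exponents.

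Fix $\alpha\in\Nn^I$ and write $F=\Cn\langle e_i\rangle$. The space $F[\alpha]$ of monomials of multidegree $\alpha$ is finite dimensional, and the degree-$\alpha$ part $\mathcal R^t_T[\alpha]$ of the defining ideal is the span of all products $u\,r\,v$ with $r$ a generator \eqref{adRel} and $u,v$ monomials of complementary degree. In the monomial basis these spanning vectors are the columns of a matrix $M(t)$ whose entries are integer powers of $t$ with coefficients in $\{0,\pm1\}$, so $\dim_{\Cn}U_t^+(T)[\alpha]=\dim_{\Cn}F[\alpha]-\operatorname{rank}M(t)$. Since every minor of $M(t)$ is a Laurent polynomial, $\operatorname{rank}M(t)$ is maximal off a finite set, and hence the quotient dimension is upper semicontinuous: it attains a minimal value $d_{\min}$ on a dense open set and satisfies $\dim_{\Cn}U_t^+(T)[\alpha]\ge d_{\min}$ for all $t$. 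For the matching upper bound I would fix a homogeneous basis of $L(T)$ built from standard multicommutators \eqref{multicomm} together with the associated ordered PBW monomials $B_\alpha$ in degree $\alpha$. Because each relation \eqref{adRel}, once expanded, has a distinguished leading monomial with coefficient $1$ independent of $t$ (only the remaining terms carry powers of $t$), the usual straightening reduces every monomial of $F[\alpha]$ to an $R$-combination of $B_\alpha$ uniformly in $t$; this gives $\dim_{\Cn}U_t^+(T)[\alpha]\le|B_\alpha|=\dim_{\Cn}U(L(T))[\alpha]$ for every $t$, with equality at $t=1$ by the PBW theorem.

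The crux — and the step I expect to be the main obstacle — is to upgrade this spanning statement to $R$-linear independence of $B_\alpha$, equivalently to show $d_{\min}=|B_\alpha|$ so that no dimension is lost for generic $t$. Concretely I would regard $\mathcal U_R[\alpha]:=(R\langle e_i\rangle/\mathcal R_T)[\alpha]$ as a finitely generated module over the PID $R$, whose fibre at $t_0\in\Cn^\times$ is $U_{t_0}^+(T)[\alpha]$; constancy of fibre dimension is equivalent to freeness of $\mathcal U_R[\alpha]$. I would obtain freeness from Bergman's Diamond Lemma, provided the reduction system formed by the relations \eqref{adRel} is confluent over $R$, i.e. every overlap ambiguity among the $\text{ad}^t$-multicommutators resolves to zero. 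Checking this is the genuinely technical point: in each ambiguity the powers of $t$ produced by \eqref{adDef} must cancel, and this is exactly where the specific exponent built from $\nu$ in \eqref{nuDef} and the selection rule ``$\sum_{j\ge2}\alpha_{i_j}\in\Delta^+_T$ but $\sum_{j\ge1}\alpha_{i_j}\notin\Delta^+_T$'' for the relations have to conspire. Granting confluence, the irreducible monomials of degree $\alpha$ form an $R$-basis of $\mathcal U_R[\alpha]$ whose cardinality is independent of $t$ and, by specialising at $t=1$, equals $\dim_{\Cn}U(L(T))[\alpha]$, giving the proposition. As a safeguard, if full confluence is unwieldy, the same conclusion follows from any proof that $B_\alpha$ remains linearly independent at a single transcendental value of $t$ (for instance via explicit root vectors with invertible-over-$R$ transition to $B_\alpha$): independence at one generic point forces the free rank of $\mathcal U_R[\alpha]$ to be at least $|B_\alpha|$, and the uniform spanning bound realises $\mathcal U_R[\alpha]$ as a quotient of $R^{|B_\alpha|}$, so the surjection $R^{|B_\alpha|}\twoheadrightarrow\mathcal U_R[\alpha]$ has torsion kernel inside a free module, hence zero kernel and freeness.
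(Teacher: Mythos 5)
Your overall framework --- organising $\{U_t^+(T)\}$ as the fibres of a form over the Laurent polynomial ring and reducing the first equality to freeness of the finitely generated graded piece over that PID --- is exactly the paper's (which uses $A=\Cn[u,u^{-1}]$ and $U_A^+(T)=A\langle e_i\rangle/\mathcal R^u_T$), and your handling of the second equality via Remark \ref{Rem:spec} also matches. The gap is in how you establish freeness: you route it through two claims, neither of which is proved. First, the ``uniform spanning'' upper bound: you assert that since each relation \eqref{adRel} has leading word $e_{i_1}\cdots e_{i_k}$ with coefficient $1$, ``the usual straightening'' reduces every word of degree $\alpha$ to a combination of the PBW monomials $B_\alpha$. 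But the defining relations of $U_t^+(T)$ are only the quantized analogues of the \emph{defining} relations of $L(T)$ (the multicommutators selected by the root condition); they are not a full straightening system of the form $e_je_i=t^{?}e_ie_j+\cdots$ for all pairs, and the commutation identities that make PBW spanning work in $U(L(T))$ are consequences of the Lie structure that need not lift to $U_t^+(T)$ for $t\neq 1$. Second, and as you yourself flag, the matching lower bound $d_{\min}\geq |B_\alpha|$ is deferred to confluence of the reduction system (all overlap ambiguities resolving over $R$), or to independence of $B_\alpha$ at one transcendental value; neither is verified, and checking confluence for a relation set indexed by a root-theoretic selection rule is precisely the hard combinatorial content that would have to be supplied. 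As written, your semicontinuity argument only yields $\dim_{\Cn}U_t^+(T)[\alpha]\geq d_{\min}$ with $d_{\min}\leq |B_\alpha|$, which bounds nothing in the direction you need.

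The paper sidesteps both issues by never exhibiting a basis: it notes that $U_A^+(T)[\alpha]$ is a finitely generated $A$-module, asserts that it embeds in the $\Cn(u)$-vector space $\Cn(u)\otimes_A U_A^+(T)$ and is therefore torsion-free, and concludes that it is free of some rank $s$ over the PID $A$; then every specialization $u\mapsto t$ gives $\dim_{\Cn}U_t^+(T)[\alpha]=s$, and evaluating at $t=1$ identifies $s$ with $\dim_{\Cn}U(L(T))[\alpha]$. If you want to repair your write-up with minimal changes, replace the Diamond Lemma step by this torsion-freeness observation --- your own closing ``safeguard'' paragraph is already most of the way there, except that it still presupposes the two unproved inputs above. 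Note finally that, as in the paper, the specialization argument only covers $t\neq 0$ (and $t=0$ is anyway problematic because \eqref{adDef} involves negative powers of $t$); your one-line remark about ``clearing the finitely many negative exponents'' does not by itself handle that case.
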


\begin{proof}
One can use standard arguments coming from \cite{rein} or \cite{rin2} for instance. 

Denote by $A$ the Laurent polynomial ring $\mathbb C[u,u^{-1}]$. Then $A/(u-1) \cong \mathbb C$ under the isomorphism of evaluation at $u=1$. Let $$U_A^+(T):=A \langle e_i; i\in I\rangle / \mathcal R^u_T.$$ We have:
$$
	U_1^+(T)= A/(u-1) \otimes_A U_A^+(T) \cong \mathbb C \otimes_A U_A^+(T).
$$

But $U_1^+(T)=U(L(T))$ and hence has a PBW basis with finite dimensional $\mathbb{N}^{I}$ homogeneous subspaces, which follows directly from Remark \ref{Rem:spec} since $U_1^+(T)$ is exactly the specialization of $U_t^+(T)$ at $t=1$. Also, $U_A^+(T)[\alpha]$ is finitely generated as an $A$-module, $\alpha \in \mathbb N^I$. It remains to be shown that $U_A^+(T)[\alpha]$ is free as an $A$-module, since in that case we have:
$$
	U_1^+(T)[\alpha] \cong \mathbb C\otimes_A U_A^+(T)[\alpha] \cong 
	(\mathbb C\otimes_A A^s) \cong \mathbb C^s,\quad  \mbox{for some}\ \ s\in \mathbb N_0.
$$
Let $U^*=\mathbb{C}(u)\otimes_A U_A^+(T)$. Then $U_A^+(T)[\alpha]$ is a submodule of $U^*[\alpha]$ considered as $A$-modules, hence is torsion free. Therefore, since $A$ is a principal ideal domain we see that $U_A^+(T)[\alpha]$ is free, which completes the proof.
\end{proof}

\section{Category $\Rep{q}$ and its twisted Hall algebra}

\subsection{Category of bound representations of a quiver} Let $Q$ be a quiver given by a set of vertices $Q_0$ and a set of arrows $Q_1$ denoted by $\rho:i\rightarrow j$ for $i,j\in Q_0$. We only consider finite quivers without oriented cycles, loops, or multiple arrows. 

A finite-dimensional $\mathbb F_q$-representation of $Q$ is given by a tuple \[V=((V_i)_{i\in Q_0},(V_{\rho})_{\rho \in Q_1}:V_i\rightarrow V_j)\] of finite-dimensional $\mathbb F_q$-vector spaces and $\mathbb F_q$-linear maps between
them. A  morphism of representations $f:V\rightarrow W$ is a tuple $f=(f_i:V_i\rightarrow W_i)_{i\in Q_0}$ of $\mathbb F_q$-linear maps such that $W_{\rho}f_i=f_{j}W_{\rho}$ for all $\rho:i\rightarrow j$. 

Let $\mathbb F_q Q$ be the path algebra of $Q$ and $I$ be an admissible ideal in $\mathbb F_q Q$ (see \cite[Section II.1]{ass} for precise definitions). Supposing that $I$ is generated by a minimal set of relations, we denote by $r(i,j)$ the number of relations with starting vertex $i$ and terminating vertex $j$.  We say that the representation $V$ of $Q$ is a bound by  $I$ if $V_r=0$ for any $r\in I$ (that is, all the relations in $I$ are satisfied). To simplify the notation we denote by $\Rep{q}$ the abelian category of representations of $Q$ over $\mathbb F_q$ bound by an ideal $I$ assuming that it is always clear which ideal we have. One can show that $\Rep{q}$ is equivalent to the category of left modules over $\mathbb F_qQ/I$ analogously to the case of unbound quivers (see \cite[Section III.1]{ass}). Note that, in general, $\Rep{q}$ is not hereditary, i.e. we can have $\Ext^{k}(V,W)\neq 0$ for some $k>1$ and some representations $V$ and $W$. 

The Grothendieck group $K_0(\Rep{q})$ can be naturally identified with $\mathbb{Z}^{Q_0}$. The dimension $\underline{\dim} V\in K_0(Q)$ of $V$ is defined by $\underline{\dim}V=(\dim V_i)_{i\in Q_0}$.

\subsection{Representation directed quivers}  \label{quiver}

In the following we assume that the algebra $\mathbb F_q Q/I$ is of global dimension at most two. In this case for two representations $V$ and $W$ with $\underline{\dim} V=\beta$ and $\underline{\dim} W=\beta'$ the Euler form of $\Rep{q}$ has a precise form given in terms of dimension vectors (see \cite[Proposition 2.2]{bon})
\begin{equation*} 
\begin{split}
\Sc{V}{W}=&\dim\Hom(V, W)-\dim\Ext^1(V,W)+\dim\Ext^2(V,W)\\
=&\sum_{i\in Q_0}\beta_i\beta_i'-\sum_{\rho:i\rightarrow j\in Q_1} \beta_i\beta_j'+\sum_{(i,j)\in Q_0\times Q_0}r(i,j)\beta_i \beta'_j,
\end{split}
\end{equation*}
hence the Euler form gives rise to the following unit form 
$$
	T_Q(\beta):=\Sc{\beta}{\beta}=\sum_{i\in Q_0}\beta_i^2-\sum_{\rho:i\rightarrow j\in Q_1} \beta_i\beta_j+\sum_{(i,j)\in Q_0\times Q_0}r(i,j)\beta_i \beta_j.
$$

In what follows we only consider the case when the algebra $\mathbb F_q Q/I$ is representation directed, which, in particular, means that the bound quiver $(Q,I)$ has just a finite number (up to equivalence) of indecomposable representations. In this case we can enumerate the indecomposable representation $V^{(1)},\dots,V^{(n)}$ in such a way that $\Ext^1(V^{(k)}, V^{(l)})=0$, if $k \leq l$ and $\Hom(V^{(k)},V^{(l)})=0$, if $k<l$.

\begin{rem} \label{GabAn} 
Due to \cite{bon} if $\mathbb F_q Q/I$ is representation directed then the form ${T_Q}$ is weakly positive and the set of its  positive roots  $\Delta_{T_Q}^+$ is finite. Moreover by \cite[Theorem 1.3]{dr} we have that if $q\neq 2$ then the assignment $V \mapsto \underline{\dim} V$ gives rise to a bijection between the set of indecomposable representations of $Q$ and $\Delta_{T_Q}^+$ (see also \cite[Theorem 3.3]{bon} for the original statement for algebraically closed fields).
\end{rem}

We mention also that Ringel showed that for any representation directed algebra there exist Hall polynomials which count Hall numbers $F_{M,N}^{R}$ (see \cite[Theorem 1]{rin2} for details). Let $S_i$ denote the simple representation of $Q$ at vertex $i\in Q_0$, i.e. $(S_i)_i=\mathbb F_q$, $(S_i)_j=0$ for $j\neq i$ and $(S_i)_\rho=0$ for all $\rho \in Q_1$. We will need the following

\begin{prop} \label{HallCon}
Let $M$ be an indecomposable bound representation of $Q$ over $\mathbb F_q$. The following identity holds
$$F^{M\oplus S_i}_{S_i,M}=q^{\nu(m,\alpha_i)-\nu(\alpha_i,m)}F^{M\oplus S_i}_{M,S_i},$$
in which $i \in Q_0$, $m=\underline{\dim}M$ and $\nu$ is a function given by \eqref{nuDef}.
\end{prop}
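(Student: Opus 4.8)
The plan is to compute the two Hall numbers $F^{M\oplus S_i}_{S_i,M}$ and $F^{M\oplus S_i}_{M,S_i}$ separately, by counting subrepresentations of $R:=M\oplus S_i$ directly, and then to identify the resulting exponent with $\nu(m,\alpha_i)-\nu(\alpha_i,m)$. Throughout I use that $\dim\Hom(S_i,M)$ is the dimension of the socle of $M$ at $i$ (lines in $M_i$ killed by every arrow out of $i$), that $\dim\Hom(M,S_i)$ is the dimension of the top of $M$ at $i$, and that $S_i$ is the \emph{only} representation with dimension vector $\alpha_i$. I also use that an indecomposable $M$ in our representation directed category is a brick with $\End M=\mathbb F_q$ (its only non-invertible endomorphism is $0$), which in the range $q\neq 2$ is the content of the bijection with positive roots recalled in Remark~\ref{GabAn}. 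The case $M\simeq S_i$ is trivial, so assume $M\not\simeq S_i$. A Riedtmann--Peng style formula could also be invoked, but direct counting is cleaner here precisely because the middle term is decomposable.

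First I would compute $F^{R}_{M,S_i}$, the number of subrepresentations $U\subseteq R$ with $U\simeq S_i$ and $R/U\simeq M$. Such a $U$ is a line inside $(\text{socle of }M\text{ at }i)\oplus (S_i)_i\subseteq R_i$; writing a generator as $(u,\lambda)$ one checks that $R/U\simeq M$ holds exactly when $\lambda\neq 0$, since for $\lambda=0$ the quotient acquires $S_i$ as a direct summand and so cannot be the indecomposable $M$. Counting these lines gives $F^{R}_{M,S_i}=q^{\dim\Hom(S_i,M)}$. Next I would compute $F^{R}_{S_i,M}$, the number of $W\subseteq R$ with $W\simeq M$; here $R/W$ has dimension vector $\alpha_i$ and is therefore automatically $\simeq S_i$, so the quotient imposes no condition. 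Parametrising such $W$ by injections $\iota=(\iota_1,\iota_2)\colon M\to M\oplus S_i$ modulo $\Aut M$, and using $\End M=\mathbb F_q$ to see that $\iota$ is injective precisely when $\iota_1\neq 0$, I obtain $F^{R}_{S_i,M}=q^{\dim\Hom(M,S_i)}$. Taking the quotient yields
\[
\frac{F^{R}_{S_i,M}}{F^{R}_{M,S_i}}=q^{\,\dim\Hom(M,S_i)-\dim\Hom(S_i,M)}.
\]

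It then remains to prove the numerical identity $\dim\Hom(M,S_i)-\dim\Hom(S_i,M)=\nu(m,\alpha_i)-\nu(\alpha_i,m)$, and this is where I expect the real work to be. The strategy is to read off $\Hom(S_i,M)$ (and $\Ext^\bullet(S_i,M)$) from the minimal projective resolution of $S_i$, whose first syzygy is governed by the arrows out of $i$ and whose second syzygy is governed by the relation numbers $r(i,j)$ --- this is exactly where the hypothesis $\mathrm{gl.dim}\le 2$ and the splitting of the coefficient $a_{ij}$ of $T_Q$ into its arrow part $-d_{ij}$ and relation part $+r(i,j)$ enter --- and dually for $\Hom(M,S_i)$. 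In these terms $\dim\Hom(S_i,M)$ is the kernel of the natural map $M_i\to\bigoplus_{\rho\colon i\to j}M_j$, which should equal all of $M_i$ exactly when $\sum_l (a_{il})_- m_l=0$ and drop otherwise, matching the dichotomy produced by the factor $\delta(\cdot)$ in \eqref{nuDef} together with the value $\langle\alpha_i,m\rangle_T^{0}=m_i+\sum_l (a_{il})_- m_l$; the analogous statement with $\sum_k (a_{ki})_- m_k$ and $\langle m,\alpha_i\rangle_T^{0}$ gives $\dim\Hom(M,S_i)=\nu(m,\alpha_i)$. The main obstacle is that $\dim\Hom(M,S_i)$ and $\dim\Hom(S_i,M)$ are a priori module-dependent, whereas the right-hand side depends only on the dimension vector $m$; here I would invoke representation directedness to force the relevant structure maps of $M$ to be either zero or injective as the dimension count predicts, and to kill the backward homomorphisms and the higher $\Ext$ groups that would otherwise obstruct the match. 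The delicate point is the case in which an arrow at $i$ is compensated by a relation (so $a_{ij}\ge 0$ although $d_{ij}=1$): there the naive dimension count and the $\delta$-factor must be reconciled with the actual vanishing of the corresponding map, and checking that the relation contributions cancel between the two directions, leaving only the arrow data in the exponent, is the crux of the argument.
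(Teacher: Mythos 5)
Your computation of the two Hall numbers is correct and takes a genuinely different route from the paper's. The paper invokes the Riedtmann--Peng formula $F^R_{V,W}=\frac{|\Ext(V,W)_R|}{|\Hom(V,W)|}\cdot\frac{|\Aut(R)|}{|\Aut(V)|\,|\Aut(W)|}$, notes that $|\Ext(M,S_i)_{M\oplus S_i}|=|\Ext(S_i,M)_{M\oplus S_i}|=1$, and lets the automorphism factors cancel in the ratio, so that only $|\Hom(M,S_i)|/|\Hom(S_i,M)|$ survives. Your direct enumeration --- lines spanned by $(u,\lambda)$ with $u\in\Hom(S_i,M)$ for $F^{R}_{M,S_i}$, and injections $(\iota_1,\iota_2)$ modulo $\Aut(M)$ for $F^{R}_{S_i,M}$ --- arrives at the same individual values $q^{\dim\Hom(S_i,M)}$ and $q^{\dim\Hom(M,S_i)}$ that the paper's formula produces, hence at the same ratio. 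What your route buys is elementarity (no appeal to the Green/Riedtmann formula or to the Miyata-type fact that an extension with decomposable middle term $M\oplus S_i$ must be split, which you replace by the explicit check that the quotient is $M$ iff $\lambda\neq 0$); what the paper's route buys is that the ratio identity drops out with no case analysis and without using $\End(M)=\mathbb F_q$, which in any case your count does not really need either, since the factor $|\Aut(M)|=|\End(M)^\times|$ cancels against the count of nonzero $\iota_1$.

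Both arguments then stand or fall on the identity $\dim\Hom(M,S_i)=\nu(m,\alpha_i)$ and $\dim\Hom(S_i,M)=\nu(\alpha_i,m)$, and here your proposal stops at a sketch. The paper simply declares this step ``straightforward to see'' and offers no argument, so you are not behind the paper; but you have correctly located the actual content of the proposition, and the part you leave open is precisely the nontrivial part. Concretely, unwinding \eqref{nuDef}, the claim is a dichotomy: the kernel of $M_i\to\bigoplus_{\rho:i\to j}M_j$ equals all of $M_i$ when $m_j=0$ for every arrow $i\to j$ not compensated by a relation, and equals $0$ otherwise (and dually for the top of $M$ at $i$). Neither half of this dichotomy is automatic from the definitions; both require representation-directedness of $\mathbb F_qQ/I$ (and, via Remark \ref{GabAn}, implicitly $q\neq 2$), and the configuration you flag --- an arrow $i\to j$ coexisting with a relation from $i$ to $j$, so that $(a_{ij})_-=0$ while the arrow may a priori act nontrivially on $M$ --- is exactly where a naive dimension count can fail. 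Until that verification is carried out, your proof (like the paper's) is incomplete at this one step; everything before it is sound.
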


\begin{proof}
First we mention that for any $V, W, R \in \Rep{q}$ we have
$$
	F^R_{V,W}=\frac{|\Ext(V,W)_R|}{|\Hom(V,W)|}\cdot \frac{|\Aut(R)|}{|\Aut(V)| \cdot |\Aut(W)|},
$$
in which $\Ext(V,W)_R$ is the subset of $\Ext(V,W)$ parameterising extensions with middle term isomorphic to $R$.

\noindent It is straightforward to see that 
$$|\Hom(M,S_i)|=q^{\nu(m,\alpha_i)}, \quad |\Hom(S_i,M)|=q^{\nu(\alpha_i,m)}.$$
Also we have that $$|\Ext(M,S_i)_{M\oplus S_i}|=|\Ext(S_i,M)_{M\oplus S_i}|=1.$$ 
Therefore
$$
	\frac{F^{M\oplus S_i}_{S_i,M}}{F^{M\oplus S_i}_{M,S_i}}=\frac{|\Hom(M,S_i)|}{|\Hom(S_i,M)|}=\frac{q^{\nu(m,\alpha_i)}}{q^{\nu(\alpha_i,m)}}=q^{\nu(m,\alpha_i)-\nu(\alpha_i,m)}.
$$
\end{proof}

\subsection{Twisted Hall algebra of $\Rep{q}$}

Let $Q$ be a bound quiver of global dimension at most two. Fixing some $t \in \mathbb C$ consider the quantized Lie algebra $U_t^+(T_Q)$ where $T_Q$ is the unit form associated with $Q$.

\begin{rem}
Suppose that $i\to j \in Q_1$.  Then we have $T_Q(\alpha_i+\alpha_j)=1$, hence $\alpha_i+\alpha_j\in \Delta^+_{T_Q},$ but $2\alpha_i+\alpha_j\notin \Delta^+_{T_Q}$.  We also have $\langle \alpha_i,\alpha_j\rangle -\langle \alpha_j,\alpha_i\rangle=-1$ and $\langle \alpha_i,\alpha_i+\alpha_j\rangle -\langle \alpha_i+\alpha_j,\alpha_i,\rangle+2=1$.  Then we have the following relation in $U_t^+(T_Q)$
\begin{align*}
\textup{ad}^t_{e_i}(\textup{ad}^t_{e_i}(e_j))&=\textup{ad}^t_{e_i}(e_ie_j-t^{-1}e_je_i)\\
	&=e_i^2e_j-(t+t^{-1})e_ie_je_i+e_je_i^2=0
\end{align*}
which is the usual quantum Serre relation for simple roots connected by a single arrow.  A similar statement holds for $\textup{ad}^t_{e_j}(\textup{ad}^t_{e_j}(e_i))$.
\end{rem}

One can also see that if $Q$ is unbound then $U_t^+(T_Q)$ is a quantized universal enveloping algebra of the positive part of the Lie algebra $\mathfrak g$ associated with a quiver $Q$. As proved by Ringel (see \cite{rin}) in this case the assignment $e_i \mapsto [S_i]$ (where $S_i$ is a simple representation at vertex $i\in Q_0$) gives rise to a homomorphism $\rho$ between $U_t^+(T_Q)$ (which equals  $U_t^+(\mathfrak g)$ in this case) and $\Ht{\Rep{q}}$ with $t=+\sqrt{q}$. Moreover if $Q$ has finite type then $\rho$ is an isomorphism.  

We now prove a similar theorem for bound case.

\begin{thm}
Let $Q$ be a quiver and $I$ an admissible ideal in $\mathbb F_q Q$ such that $\mathbb F_qQ/I$ is of global dimension at most 2 and representation directed.
Then the assignment $e_i \mapsto [S_i]$, $i\in Q_0$ gives rise to a homomorphism of associative algebras 
{\normalfont $$\rho:U_t^+(T_Q)\to \Ht{\Rep{q}},$$}
with $t=+\sqrt{q}$. Moreover if $q\neq 2$ then $\rho$ is an isomorphism.
\end{thm}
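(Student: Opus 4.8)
The plan is to establish the two claims in turn: that the assignment $e_i\mapsto[S_i]$ respects the defining relations of $U_t^+(T_Q)$ (so that $\rho$ is well defined), and then, under the hypothesis $q\neq 2$, that the resulting surjection is injective by a graded dimension count. Throughout I would fix $t=\sqrt q$, identify the grading group $\Nn^{Q_0}$ with dimension vectors, and write $\langle-,-\rangle$ for the Euler form of $\Rep q$. Since $U_t^+(T_Q)$ is the free algebra on the $e_i$ modulo the ideal $\mathcal R^t_{T_Q}$ generated by the multicommutators \eqref{adRel}, to produce $\rho$ it is enough to verify that the image of each such generator vanishes in $\Ht{\Rep q}$.

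The decisive input for this is Proposition \ref{HallCon}. First I would show that, for $M$ indecomposable and $t=\sqrt q$, the $\nu$-corrected operator $\textup{ad}^t_{[S_i]}$ of \eqref{adDef} annihilates the split term $[M\oplus S_i]$: the coefficients of $[M\oplus S_i]$ in the twisted products $[S_i]\cdot[M]$ and $[M]\cdot[S_i]$ differ precisely by the factor $q^{\nu(m,\alpha_i)-\nu(\alpha_i,m)}$ of Proposition \ref{HallCon}, and this is exactly compensated by the exponent in \eqref{adDef}. (The reason \eqref{adDef} was built from the $\nu$-terms rather than the naive quantized bracket is precisely to force this cancellation in the non-hereditary case.) I would then run an induction on the length $k$ of the multicommutator, using the directed ordering $V^{(1)},\dots,V^{(n)}$ and Remark \ref{GabAn} to pass between roots and indecomposables. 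The inductive claim is that $\rho$ of the inner multicommutator $\textup{ad}^t_{e_{i_2}}(\cdots e_{i_k})$, of degree $\gamma=\sum_{j\ge 2}\alpha_{i_j}$, is a scalar multiple of the class of the unique indecomposable of dimension $\gamma$ when $\gamma\in\Delta^+_{T_Q}$, and vanishes otherwise. Applying the outer $\textup{ad}^t_{[S_{i_1}]}$ then lands in degree $\gamma+\alpha_{i_1}$, which by hypothesis is not a positive root, so Remark \ref{GabAn} forbids an indecomposable there; combined with the annihilation of the split term, this forces $\rho$ of the relation to be zero.

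For the isomorphism when $q\neq 2$, I would argue surjectivity and then match graded dimensions. Surjectivity holds because the image is the subalgebra generated by the $[S_i]$, and the standard triangularity of products of simples in the Hall algebra---each bound representation admits a simple composition series---expresses every $[R]$ through such products. For injectivity it suffices, since $\rho$ is a graded surjection, to check that $\dim_{\Cn}U_t^+(T_Q)[\alpha]=\dim_{\Cn}\Ht{\Rep q}[\alpha]$ for every $\alpha$. The right-hand side is the number of isomorphism classes of representations of dimension $\alpha$, which by Krull--Schmidt and the bijection of Remark \ref{GabAn} equals the value $P(\alpha)$ of the Kostant partition function of $\Delta^+_{T_Q}$. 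By Proposition \ref{equalDim} the left-hand side equals $\dim_{\Cn}U(L(T_Q))[\alpha]$, which by the Poincar\'e--Birkhoff--Witt theorem is again $P(\alpha)$ once one knows, via Kosakowska's analysis \cite{kos}, that each root space of $L(T_Q)$ is one-dimensional. A graded surjection between spaces of equal finite dimension is an isomorphism.

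The hard part, and the step I expect to consume most of the work, is the vanishing of the multicommutator relations in the non-hereditary setting: I must control the $\Ext^2$ contributions so that they do not disturb the clean cancellation of Proposition \ref{HallCon}, and show that the decomposable terms produced in the inductive step cannot survive once the target degree leaves $\Delta^+_{T_Q}$. By contrast, the dimension count is comparatively formal, resting on Proposition \ref{equalDim} and the partition-function identity. The hypothesis $q\neq 2$ enters only through Remark \ref{GabAn}, which is exactly what underwrites both the dimension match and the root/indecomposable dictionary driving the induction.
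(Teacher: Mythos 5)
Your proposal follows essentially the same route as the paper: Proposition \ref{HallCon} to cancel the split term $[M\oplus S_i]$ in $\textup{ad}^t_{[S_i]}([M])$, an induction on the length of the multicommutator using the root/indecomposable dictionary of Remark \ref{GabAn} to show the relations \eqref{adRel} map to zero, and then surjectivity plus a graded dimension count via Proposition \ref{equalDim}, PBW, and Kosakowska's bound on $\dim L(T_Q)$ to conclude injectivity when $q\neq 2$. The only (immaterial) deviation is that you invoke one-dimensionality of the root spaces of $L(T_Q)$ where the paper gets by with the inequality $\dim_{\Cn}L(T_Q)\leq |\Delta^+_{T_Q}|$ combined with surjectivity.
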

\begin{proof}
We show $\rho$ is an homomorphism. We need to show that the relations \eqref{adRel} are satisfied. Let $M$ be an indecomposable representation with dimension $\underline{\dim}M=m$. Suppose that $m+\alpha_i$ is not a root.  Therefore 
\begin{align*}
	[M]\cdot[S_i]&=q^{\frac{1}{2}\Sc{m}{\alpha_i}} F_{M,S_i}^{M\oplus S_i} [M \oplus S_i],\\
	[S_i]\cdot[M]&=q^{\frac{1}{2}\Sc{\alpha_i}{m}} F_{S_i,M}^{M\oplus S_i} [M \oplus S_i],
\end{align*}
By Proposition \ref{HallCon} we have that 
\begin{align*}
	[S_i]\cdot[M]&=q^{\frac{1}{2}\Sc{\alpha_i}{m}} q^{\nu(\alpha_i,m)-\nu(m,\alpha_i)} F_{M,S_i}^{M\oplus S_i} [M \oplus S_i]\\
	 &=q^{\frac{1}{2}\Sc{\alpha_i}{m}+\nu(m,\alpha_i)-\nu(\alpha_i,m)}F_{M,S_i}^{M\oplus S_i} [M \oplus S_i].	 
\end{align*}
Thus we have that $\text{ad}^t_{[S_i]}([M])=0.$

Suppose that $m+\alpha_i$ is a root. Then (since $\mathbb F_qQ/I$ is representation directed) there exists exactly one indecomposable representation with dimension $m+\alpha_i$. Denote this representation by $M_1$. Also, we have that either $\Ext(S_i,M)=0$ or $\Ext(M,S_i)=0$. Assume for definiteness that $\Ext(S_i,M)=0$. Then 
\begin{align*}
	[M]\cdot[S_i]&=q^{\frac{1}{2} \Sc{m}{\alpha_i}}\left(F_{M,S_i}^{M\oplus S_i} [M \oplus S_i]+F_{M,S_i}^{M_1}[M_1]\right),\\
	[S_i]\cdot[M]&=q^{\frac{1}{2} \Sc{\alpha_i}{m}} F_{S_i,M}^{M\oplus S_i} [ M \oplus  S_i]\\
	&=q^{\frac{1}{2} \Sc{\alpha_i}{m}+\nu(m,\alpha_i)-\nu(\alpha_i,m)} F_{M,S_i}^{M\oplus S_i} [ M \oplus  S_i].
\end{align*}
Now we consider $$\text{ad}^t_{[S_i]}([M])=[S_i]\cdot [M]-t^{\Sc{\alpha_i}{m}-\Sc{m}{\alpha_i}+2\nu(m,\alpha_i)-2\nu(\alpha_i,m)}[M]\cdot[S_i].$$  The coefficient of $[M\oplus S_i]$ is 0 and hence we are left with $\text{ad}^t_{[S_i]}([M])=c[M_1],$ where $c\neq 0 \in \mathbb C$.

Now suppose that $\sum_{j=2}^k\alpha_{i_j}\in \Delta^+_{T_Q}$ then inductively using 
$\text{ad}^t_{[S_{i_{j-1}}]}([M_{j}])=c_{i_{j-1}}[M_{j-1}]$ where $M_{j},j\in \{2,3,\dots k\}$ is the unique indecomposable of degree $\sum_{\ell=j}^k\alpha_{i_\ell}$ and $c_{i_j}\in \mathbb C$ we conclude that 
\begin{equation*}
\text{ad}^t_{[S_{i_2}]}(\text{ad}^t_{[S_{i_3}]}(\cdots \text{ad}^t_{[S_{i_{k-1}}]}([S_{i_k}])\cdots))=c[M]
\end{equation*}
where $M=M_2$ and $c\in \Cn$ (possibly 0). But then $\text{ad}^t_{[S_{i_1}]}([M])=0$ as $\sum_{j=1}^k\alpha_{i_j}\notin \Delta^+_{T_Q}$. Which proves that $\rho$ is a homomorphism.

Assume now that $q\neq 2$. Since $\mathbb F_q Q/I$ is representation directed by Remark \ref{GabAn} we have that $\Ht{\Rep{q}}$ is generated by the roots $\alpha \in \Delta^+_{T_Q}$ and 
$$\dim \Ht{\Rep{q}}[\beta]=\Big \{ (\gamma_\alpha) \in \mathbb N^{\Delta^+_{T_Q}} \ | \ \sum \gamma_\alpha \alpha =\beta \Big\}.$$ 
Hence $\rho$ is an epimorphism. Also we have that $\dim_{\mathbb C} L(T_Q) \leq |\Delta^+_{T_Q}|$ (\cite[Corollary 4.3]{kos}). Therefore by Proposition \ref{equalDim} $\rho$ is a monomorphism. Finally $\rho$ is an isomorphism.

\end{proof}

\section{Examples and final remarks}

\subsection{Some examples}

\begin{ex} 
\normalfont Let $Q$ be the following quiver
$$ 
\xymatrix 
{
	Q:1 \ar@{->}[r]^{a} & 2 \ar@{->}[r]^{b} & 3
}
$$
bound by $I=\langle b a \rangle$ the ideal of $\mathbb F_q Q$ generated by $ba$. Obviously $\mathbb F_q Q/I$ is a representation directed algebra. The corresponding quadratic form is:
\begin{equation}\label{exForm}
	T_Q(\beta)=\beta_1^2+\beta_2^2+\beta_3^2-\beta_1 \beta_2 - \beta_2 \beta_3 + \beta_1 \beta_3.
\end{equation}
It is not hard to see that the quantum Serre relations of $U_t^+(\mathfrak sl_4)$ are satisfied in $\Ht{\Rep{q}}$ where $t=+\sqrt{q}$. Moreover we have the following additional relations:
\begin{equation*}
\textrm{ad}^t_{e_1}(e_3)=0, \quad \textrm{ad}^t_{e_1}(\textrm{ad}^t_{e_2}(e_3))=0.
\end{equation*}
These relations are due to the fact that $\alpha_3$, $\alpha_2+\alpha_3$ are roots but $\alpha_1+\alpha_3$ and $\alpha_1+\alpha_2+\alpha_3$ are not the roots of the form \eqref{exForm}. Denoting $[x,y]_t=txy-yx$ to be the twisted commutator, we have that 
$$
 \mathbf{H}_{\Rep{q}}^{tw} \simeq \mathbb{C}\langle e_i\ |\ i=1,2,3\rangle / \mathcal R.
$$
where $\mathcal R$ is the two-sided ideal generated by the usual quantum Serre relations for $U_t^+(\mathfrak{sl}_4)$ but with the relation $[e_3,e_1]=0$ replaced by $[e_3,e_1]_t=0$ and one extra relation $$[e_1,[e_2,e_3]_t]=0.$$ This computation follows from the fact that the positive root system of $U_t^+(T_Q)$ is $\Delta^+(\mathfrak{sl}_4)\backslash \{\alpha_1+\alpha_2+\alpha_3\}.$

This description of $\Ht{\Rep{q}}$ is a little bit different than the one obtained in \cite[Example 5.9]{ChDe} for the same quiver $Q$ and ideal $I$ but with slightly different twisting. 
\end{ex}

\begin{ex}
\normalfont
More generally let $Q$ be the following quiver
$$ 
\xymatrix 
{
	Q:1 \ar@{->}[r]^{a_1} & 2 \ar@{->}[r]^{a_2} & \dots  \ar@{->}[r]^{a_n} & n
}
$$
bound by the ideal $I=\langle a_n\dots a_1 \rangle$ of the algebra $\mathbb F_q Q$. By similar observations we have that 
$$
 \Ht{\Rep{q}} \simeq \mathbb{C}\langle e_i\ |\ i=1,\dots,n\rangle / \mathcal R.
 $$
 where $\mathcal R$ is the two-sided ideal generated by the usual quantum Serre relations for $U_t^+(\mathfrak{sl}_{n+1})$ but with the relation $[e_n,e_1]=0$ replaced by $[e_n,e_1]_t=0$ and one extra relation $$[e_1,[e_2,\dots,[e_{n-1},e_n]_t \dots]_t].$$ Similar to the previous case, this follows since the positive root system of $U_t^+(T_Q)$ is $\Delta^+(\mathfrak{sl}_n)\backslash \{\alpha_1+\alpha_2+\cdots +\alpha_n\}.$	
\end{ex}

\begin{ex} \label{exRomb}
\normalfont

\normalfont Let $Q$ be the following quiver
$$ 
\xymatrix 
{
	& 4 &\\
	Q:2 \ar@{->}[ru]^{a_2} & & 3 \ar@{->}[lu]_{b_2}\\
	&\ar@{->}[lu]^{a_1}  1  \ar@{->}[ru]_{b_1}
}
$$
bound by the ideal $I=\langle a_2a_1- b_2b_1 \rangle$ in $\mathbb F_qQ$. We have $$T_Q(\beta)=\beta_1^2+\beta_2^2+\beta_3^2+\beta_4^2-\beta_1\beta_2-\beta_1\beta_3-\beta_2\beta_4-\beta_3\beta_4+\beta_1\beta_4.$$
In this case apart from the standard relations in $\Ht{\Rep{q}}$, which came from considering $Q$ as unbound quiver, we have the following extra relations:
$$
	[e_4,e_1]_t=0, \quad [e_1,[e_2,e_4]_t]=0, \quad [e_1,[e_3,e_4]_t]=0.
$$ 
This follows from a similar computation of the positive roots of the form $T_Q$.
\end{ex}
\begin{ex}
\normalfont Let $Q$ be the same quiver as in Example \ref{exRomb} bound by  ideal $I=\langle a_2a_1, b_2b_1 \rangle$. In this case we have $$T_Q(\beta)=\beta_1^2+\beta_2^2+\beta_3^2+\beta_4^2-\beta_1\beta_2-\beta_1\beta_3-\beta_2\beta_4-\beta_3\beta_4+2\beta_1\beta_4.$$
Hence the extra relations will have the following form 
\begin{align*}
  [e_1,[e_2,e_4]_t]_{t^{-1}}&=0, \quad [e_1,[e_3,e_4]_t]_{t^{-1}}=0 \\				
  [e_4,e_1]_{t^2}&=0,   \quad [e_1,[e_2,[e_3,e_4]_t]_t]=0.
\end{align*}
As before, this follows from computing the positive roots of the form $T_Q$.

\end{ex}

\subsection{Commutative representations of quivers over $\mathbb F_1$ and Hall algebra}

As the limiting case of our construction one can also study representations of quivers with commutativity conditions over the so-called field with one element: $\mathbb F_1$. Such a field is not defined per se, but there is agreement on what should be the definition and basic properties of the category of vector spaces over $\mathbb{F}_1$ as a limiting case of the categories of vector spaces over $\mathbb{F}_q$ (see for example \cite{matt}).

Suppose that $Q$ is without oriented cycles and loops. The category $\Rep{1}$ of finite-dimensional representations of $Q$ over $\mathbb F_1$ is defined similarly to $\textrm{Rep}_Q(\mathbb F_q)$ (see \cite[Section 4]{matt}). It is straightforward to define the representation of quiver over $\mathbb F_1$ which satisfy given relations. In the case when $Q$ has only commutativity relations it is straightforward to show that any indecomposable object in $\textrm{Rep}_Q(\mathbb F_q)$ is one-dimensional (similar statement to \cite[Theorem 5.1]{matt}). 

M.Szczesny in \cite{matt} defined the Hall algebra, $\mathbf H_{\Rep{1}}$, of $\Rep{1}$ for the case when $Q$ is unbound following Ringel's definition. A slightly modified construction can be applied in the bound case as well, and we use the same notation as the unbound case. Moreover, let $\mathbb K=\{0,1\}$ be the one-dimensional space over $\mathbb F_1$. Denote by $S_i$ a simple representation of $Q$ at vertex $i\in Q_0$, i.e. $(S_i)_i=\mathbb K$, $(S_i)_j={0}$ for $j\neq i$ and $(S_i)_\rho=0$ for all $\rho\in Q_1$. Then one shows the following

\begin{rem}
Let $Q$ be a quiver without oriented cycles, loops or multiple arrows bound by all possible commutativity relations. The assigment $e_i \mapsto [S_i]$ gives rise to an epimorphism of associative algebras $U^+_1(T_Q)$ and \normalfont $\mathbf H_{\Rep{1}}$.
\end{rem}

\end{document}